\documentclass[12pt]{article}
\overfullrule =0pt
\usepackage{graphicx}
\usepackage{subfigure}
\usepackage{amssymb,amsmath,amsthm,epsfig}
\usepackage{latexsym, enumerate}
\usepackage{eepic}
\usepackage{epic}
\usepackage{graphicx}
\usepackage{color}
\usepackage{ifpdf}
\usepackage{booktabs}

\usepackage{dsfont}
\usepackage{multirow}
\usepackage{bm}

\topmargin -0.5in
\textheight 9.0in
\textwidth 6.5in
\oddsidemargin 0.0in
\evensidemargin 0.0in

\theoremstyle{plain}
\newtheorem{lem}{Lemma}[section]
\newtheorem{thm}[lem]{Theorem}
\newtheorem{cor}[lem]{Corollary}

\theoremstyle{definition}

\theoremstyle{remark}
\newtheorem{rem}{Remark}[section]

\begin{document}
\title{ \large\bf On uniqueness and nonuniqueness for potential reconstruction in quantum fields from one measurement II. the non-radial case}

\author{
Zhi-Qiang Miao\thanks{College of Mathematics and Econometrics, Hunan University, Changsha 410082, Hunan Province, China. Email: zhiqiang\_miao@hnu.edu.cn}
\and
Guang-Hui Zheng\thanks{Corresponding author. College of Mathematics and Econometrics, Hunan University, Changsha 410082, Hunan Province, China. Email: zhenggh2012@hnu.edu.cn}
}

\date{}
\maketitle

\begin{abstract}
  In this article we study uniqueness and nonuniqueness for potential reconstruction from one boundary measurement in quantum fields, associated with the steady state Schr\"{o}dinger equation. It is an extension of our recent work \cite{Zheng2019}. Based the theory of the ND map and modified bessel function, the uniqueness theorem of the inverse problem in two-dimensional nd three-dimensional core-shell structure is established, respectively. When different potential and shape are considered, the nonuniqueness results is also proved.
\end{abstract}\smallskip

\smallskip
{\bf keywords}: Potential reconstruction, Schr\"{o}dinger equation, Neumann-to-Dirichlet map, modified Bessel function.

\section{Introduction}

In 1980 Calder\'{o}n published his paper entitled 'On an inverse boundary value problem' \cite{Calderon1980}. This pioneer contribution motivated many developments in inverse problems, in particular the inverse problem that we consider in this paper is also related closely with the classical Calder\'{o}n problem. In 1987, Sylvester and Uhlmann proved the uniqueness with many boundary measurements in $\mathbb{R}^3$ for $U\in C(\overline{\Omega})$. Nowadays there are many generalizations involving this result. For example, the case of reconstruction using partial boundary measurements. Moreover, one can refer to \cite{A.L.BUKHGEIM and G.UHLMANN-2002, Isakov2007, Imanuvilov2010, Imanuvilov2013} and a survey paper \cite{Kenig2014} for details. For the uniqueness results of Calder\'{o}n problem with single boundary measurement, to our knowledge, the first result is given by Isakov in 1989 \cite{Iskov1989}. Recently, Alberti and Santacesaria established the uniqueness, stability estimates and reconstruction algorithm for determining the potential in (\ref{1.1})-(\ref{1.2}) from a finite number of boundary measurements \cite{Alberti2018}. The studies of nonuniqueness are directly linked to the researches about invisibility \cite{Greenleaf2003, Greenleaf2003-1, Greenleaf2007, Greenleaf2009} and virtual reshaping \cite{Liu2009}. In fact,  Greenleaf, Lassas and Uhlmann construct some counterexamples to uniqueness in Calder\'{o}n problem by transformation optics \cite{Greenleaf2003, Greenleaf2003-1}. Furthermore, in \cite{Liu2009}, Liu also used transformation optics to reshape an obstacle in acoustic and electromagnetic scattering.

This paper is a follow-up study of our earlier work \cite{Zheng2019}, in which we studied the potential reconstruction in two-dimensional radial concentric core-shell structure. In present work, we want to further investigate the more general situation in which the domain we consider is 2-D non-radial concentric core-shell structure and three-dimensional one. More specifically, we consider the determination of piecewise constant potential in the unit disc. Based on the analytic formula of solution and the monotonicity of modified Bessel function, we prove that only one boundary measurement can recover the potential uniquely. In addition, by choosing appropriate potential and radius of the core for different core-shell structure, the boundary data $\psi|_{\partial\Omega}$ can be concordant. In other words, one boundary data are not able to determine the shape of core and potential simultaneously.

We now describe more precisely the mathematical problem. Let $\Omega\subseteq \mathbb{R}^{n}$ (n=2 or 3) be an open bounded domain containing origin possibly with multi-layered structure with a smooth boundary $\partial\Omega$, and $\upsilon$ be the outward unit normal vector to $\partial\Omega$.  Then we consider the steady state Schr\"{o}dinger equation \cite{Cohen-Tannoudji-1991} as follows
\begin{eqnarray}\label{1.1}
\left(-\frac{\hbar^2}{2m}\Delta+U(x)\right)\psi=E\psi,\ \ \ \ \ \text{in}\ \Omega,
\end{eqnarray}
with Neumann boundary condition
\begin{equation}\label{1.2}
\frac{\partial\psi}{\partial \upsilon}=g,\ \ \ \ \ \ \text{on}\ \partial\Omega,
\end{equation}
where $\hbar$, $m$ denote the reduced Planck's constant and the mass of particles respectively, $U(x)$ is the potential, $E$ is the energy value, and the solution $\psi(x)$ is called de Broglie's matter wave.

The Neumann-to-Dirichlet map is given by
\begin{equation}\label{1.2}
R_U^E:\ \frac{\partial\psi}{\partial\upsilon}\bigg|_{\partial\Omega}\ \mapsto\ \psi|_{\partial\Omega}.
\end{equation}

In this paper, we pay attention to solving the following inverse problem,

\emph{Inverse problem}: For arbitrary fixed energy value $E>0$, recover the potential $U(x)$ from one boundary measurement $\psi|_{\partial\Omega}$ knowing $R_U^E$.

This paper is organized as follows :
In section 2, we study the two-dimensional case. In particular, in section 2.1, we formulate the solution of the forward problem and the associated Neumann to Dirichlet map are given. And the uniqueness theorem of potential reconstruction from one boundary measurement is established in section 2.2. Additionally, the nonuniqueness result is obtained in section 2.3. In section 3, we deal with the case of 3-D non-radial concentric annulus, the structure of this section is same as section 2. Finally, a conclusion is given in Section 4.

\section{The potential reconstruction in 2-D core-shell structure }
\subsection{Solution formula and Neumann to Dirichlet map}
In this section, under the polar coordinates, we derive the analytic solution formula of (\ref{1.1})-(\ref{1.2}) in 2-D core-shell structure, and define the Neumann to Dirichlet map (ND map).

Multiplying each side by $-\frac{2m}{\hbar^2}$ in (\ref{1.1}), we have
\begin{equation}\label{2.1}
  \Delta\psi(x)-\widetilde{U}(x)\psi(x)=-\widetilde{E}\psi(x),
\end{equation}
where $\widetilde{U}(x)=\frac{2m}{\hbar^2}U(x)$, $\widetilde{E}=\frac{2m}{\hbar^2}E$.
Let $\Omega$ be an annulus of radius $r_1$ and $1$ (core-shell structure), and the potential $U(x)$ be a piecewise constant function, i.e.
\begin{eqnarray}\label{2.3}
\widetilde{U}(x)=
\begin{cases}
\widetilde{U}_1,\ \ \ \ |x|<r_1,\\
\widetilde{U}_2,\ \ \ \ r_1<|x|<1.
\end{cases}
\end{eqnarray}
We assume for simplicity $\widetilde{U}_1=\widetilde{E}+\sigma_1^{-1}$, $\sigma_1>0$, and $\widetilde{U}_2=\widetilde{E}+1$. Then, by the polar coordinate transformation, (\ref{1.1}) becomes
\begin{eqnarray}\label{2.4}
\frac{1}{r}\frac{\partial}{\partial r}\left(r\frac{\partial\psi}{\partial r}\right)+\frac{1}{r^{2}}\frac{\partial^{2}\psi}{\partial \phi
^{2}}-\left(\sigma_1^{-1}\chi_{\{r<r_1\}}+1\chi_{\{r_1<r<1\}}\right)\psi=0,\ \ \ \ \ r\in(0,1).
\end{eqnarray}
Suppose that $\psi|_{r=0}$ is bounded. The corresponding ND map can is given by
\begin{equation}\label{2.5}
R_{\sigma_1,r_1}:\ H^{-\frac{1}{2}}(\partial \Omega)\ni\frac{\partial\psi}{\partial r}\bigg|_{r=1}\ \mapsto\ \psi|_{r=1}\in H^{\frac{1}{2}}(\partial \Omega).
\end{equation}
Hence, the reconstruction of the potential $U(x)$ is reduced to recovering the $\sigma_1$.

Furthermore, setting $\frac{\partial\psi}{\partial r}\big|_{r=1}=g(\phi)$, the ND map (\ref{2.5}) can be represented
by solving problem (\ref{2.4}) with Neumann boundary condition $\frac{\partial\psi}{\partial r}|_{r=1}=g(\phi)$:
\begin{eqnarray}\label{2.6}
\begin{cases}
\frac{1}{r}\frac{\partial}{\partial r}\left(r\frac{\partial\psi}{\partial r}\right)+\frac{1}{r^{2}}\frac{\partial^{2}\psi}{\partial \phi
^{2}}-\sigma_1^{-1}\psi=0,\ \ \ \ \ r\in(0,r_1), \phi\in(0,2\pi),\\
\frac{1}{r}\frac{\partial}{\partial r}\left(r\frac{\partial\psi}{\partial r}\right)+\frac{1}{r^{2}}\frac{\partial^{2}\psi}{\partial \phi^{2}}-\psi=0,\ \ \ \ \ r\in(r_1,1), \phi\in(0,2\pi),\\
\psi|_{r=r_1}^+=\psi|_{r=r_1}^-,\\
\frac{\partial\psi}{\partial r}\big|_{r=r_1}^+=\sigma_1\frac{\partial\psi}{\partial r}\big|_{r=r_1}^-,\\
\frac{\partial\psi}{\partial r}|_{r=1}=g(\phi),\\
\psi|_{r=0}\ \ \text{is bounded}.
\end{cases}
\end{eqnarray}
where $\cdot|_{r=r_1}^+$ means the limit to the outside of $\{r|r=r_1\}$ and $\cdot|_{r=r_1}^-$ means the limit to the inside of $\{r|r=r_1\}$.

By the boundedness of $\psi(0)$, we assume that the matter wave $\psi(r,\phi)$ has the form
\begin{eqnarray}\label{2.7}
\psi(r,\phi)=
\begin{cases}
 \sum\limits_{n=-\infty}^{\infty}u_nI_n\left(\frac{r}{\sqrt{\sigma_1}}\right)e^{in\phi},\ \ \ \ \ r\in(0,r_1),\\
\sum\limits_{n=-\infty}^{\infty}(v_nI_n\left(r\right)+w_nK_n\left(r\right))e^{in\phi}
,\ \ \ \ \ r\in(r_1,1),
\end{cases}
\end{eqnarray}
where $I_n\left(r\right)$ and $K_n\left(r\right)$ $(n\in\mathbb{N})$ denote the $n$-th order modified Bessel functions of the first and the second kind, respectively.
$u_n$, $v_n$, $w_n$ are unknown coefficients.

From the transmission conditions on the interface $\{r|r=r_1\}$ and boundary value condition on $\{r|r=1\}$, we obtain that
\begin{eqnarray}\label{2.8}
\begin{cases}
u_nI_n\left(\frac{r_1}{\sqrt{\sigma_1}}\right)=v_nI_n\left(r_1\right)+w_nK_n\left(r_1\right),\\
u_n\sigma_1I_n'\left(\frac{r_1}{\sqrt{\sigma_1}}\right)=v_nI_n'\left(r_1\right)+w_nK_n'\left(r_1\right),\\
v_nI_n'\left(1\right)+w_nK_n'\left(1\right)=g_n,
\end{cases}
\end{eqnarray}
here $g_n=(g(\theta),e^{in\theta})$ is Fourier coefficient.

By solving (\ref{2.8}), we have
\begin{eqnarray}\label{2.9}
\begin{cases}
u_n=\frac{\left(\rho(r_1,\sigma_1)K_n(r_1)-I_n(r_1)\right)g_n}{\left(\rho(r_1,\sigma_1)K_n'(1)-I_n'(1)\right)I_n
\left(\frac{r_1}{\sqrt{\sigma_1}}\right)},\\
v_n=-\frac{g_n}{\rho(r_1,\sigma_1)K_n'(1)-I_n'(1)},\\
w_n=\frac{\rho(r_1,\sigma_1)g_n}{\rho(r_1,\sigma_1)K_n'(1)-I_n'(1)},
\end{cases}
\end{eqnarray}
where
\begin{eqnarray}\label{2.10}
\rho(r_1,\sigma_1)=\frac{\sigma_1I_n'\left(\frac{r_1}{\sqrt{\sigma_1}}\right)I_n\left(r_1\right)-I_n\left(\frac{r_1}
{\sqrt{\sigma_1}}\right)I_n'\left(r_1\right)}{\sigma_1I_n'\left(\frac{r_1}{\sqrt{\sigma_1}}\right)K_n\left(r_1\right)-I_n\left(\frac{r_1}
{\sqrt{\sigma_1}}\right)K_n'\left(r_1\right)}.
\end{eqnarray}
Hence, by substituting the coefficient formula (\ref{2.9}) into (\ref{2.7}), we get the solution of problem (\ref{2.6}). Finally, the ND map can be expressed precisely as follows,
\begin{equation}\label{2.11}
R_{\sigma_1,r_1}(g)=\sum\limits_{n=-\infty}^{\infty}\frac{\rho(r_1,\sigma_1)K_n(1)-I_n(1)}{\rho(r_1,\sigma_1)K_n'(1)-I_n'(1)}g_ne^{in\phi}.
\end{equation}
Clearly, $R_{\sigma_1,r_1}:\ H^{-\frac{1}{2}}(\partial \Omega)\rightarrow H^{\frac{1}{2}}(\partial \Omega)$ is a multiplier operator, and its operator norm is defined by
\begin{equation}\label{2.111}
\|R_{\sigma_1,r_1}\|=\sup_{g\in {H^{-\frac{1}{2}}(\partial \Omega)}}\frac{\|R_{\sigma_1,r_1}(g)\|_{H^{\frac{1}{2}}(\partial \Omega)}}{\|g\|_{H^{-\frac{1}{2}}(\partial \Omega)}}
\end{equation}
From (\ref{2.11}), it implies
\begin{equation}\label{2.112}
\|R_{\sigma_1,r_1}\|=\sup_{g\in H^{-\frac{1}{2}}(\partial \Omega)}\frac{(\sum\limits_{n=-\infty}^{\infty}(1+|n|^2)^{\frac{1}{2}}\big|\frac{\rho(r_1,\sigma_1)K_n(1)-I_n(1)}{\rho(r_1,\sigma_1)K_n'(1)-I_n'(1)}
\big|^2|g_n|^2)^{\frac{1}{2}}}{(\sum\limits_{n=-\infty}^{\infty}(1+|n|^2)^{-\frac{1}{2}}|g_n|^2)^{\frac{1}{2}}}
\end{equation}

Next, we define the following ND map for Schr\"{o}dinger equation in a disk:
\begin{equation}\label{2.12}
R(g)=\Psi|_{r=1},
\end{equation}
where $\Psi$ is the solution to
\begin{eqnarray}\label{2.13}
\begin{cases}
r^{-1}\frac{\partial}{\partial r}\left(r\frac{\partial\Psi}{\partial r}\right)+\frac{1}{r^{2}}\frac{\partial^{2}\Psi}{\partial \phi
^{2}}-\Psi=0,\ \ \ \ \ r\in(0,1),\\
\frac{\partial\Psi}{\partial r}|_{r=1}=g(\phi),\\
\Psi|_{r=0}\ \ \text{is bounded}.
\end{cases}
\end{eqnarray}
Similarly, the ND map (\ref{2.12}) can be also represented by
\begin{equation}\label{2.14}
R(g)=\sum\limits_{n=-\infty}^{\infty}\frac{I_n(1)}{I_n'(1)}g_ne^{in\phi}.
\end{equation}

\subsection{Uniqueness}
In this section, the main result is that we establish the uniqueness theorem for the potential reconstruction problem from one boundary measurement, i.e., determining uniquely the piecewise constant potential in the 2-D core-shell structure by $\psi|_{r=1}$. We first give the asymptotic property of ND map with respect to the radius of core $r_1$ and potential coefficient $\sigma_1$. Next, for proving the uniqueness result, we introduce some important lemmas about modified bessel function. Finally£¬we establish the uniqueness theorem by above lemmas.
\begin{thm}\label{th2.1}
Let $R_{\sigma_1,r_1}$, $R$ are defined by (\ref{2.5}) and (\ref{2.12}) respectively. Then,\\
(1)\ for any fixed $r_1\in(0,1)$, we have that
\begin{equation}\label{2.15}
\|R_{\sigma_1,r_1}-R\|\rightarrow0,\ (as\ \sigma_1\rightarrow1);
\end{equation}
(2)\ for any fixed $\sigma_1>0$, we have that
\begin{equation}\label{2.151}
\|R_{\sigma_1,r_1}-R\|\rightarrow0,\ (as\ r_1\rightarrow0);
\end{equation}
\end{thm}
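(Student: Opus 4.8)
The plan is to exploit that both $R_{\sigma_1,r_1}$ and $R$ are Fourier multiplier operators acting diagonally on the basis $\{e^{in\phi}\}$, so their difference is again a multiplier with symbol
\[
m_n(\sigma_1,r_1)=\frac{\rho K_n(1)-I_n(1)}{\rho K_n'(1)-I_n'(1)}-\frac{I_n(1)}{I_n'(1)},
\]
where I write $\rho=\rho(r_1,\sigma_1)$ (which silently depends on $n$ as well). Repeating, for the diagonal operator $R_{\sigma_1,r_1}-R\colon H^{-1/2}\to H^{1/2}$, the computation that produced (\ref{2.112}), the norm collapses to a supremum over a single frequency,
\[
\|R_{\sigma_1,r_1}-R\|=\sup_{n\in\mathbb{Z}}(1+n^2)^{1/2}\,|m_n(\sigma_1,r_1)|.
\]
First I would simplify $m_n$ by putting it over a common denominator and invoking the Wronskian identity $I_n'(1)K_n(1)-I_n(1)K_n'(1)=1$; the numerator then telescopes to exactly $\rho$, giving the clean form
\[
m_n=\frac{\rho}{I_n'(1)\bigl(\rho K_n'(1)-I_n'(1)\bigr)}.
\]
Thus everything reduces to showing that the single quantity $\rho=\rho(r_1,\sigma_1)$ is small, in a sense uniform over $n$.

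Next I would split the frequencies into a fixed finite block $|n|\le N$ and a tail $|n|>N$. For the tail I would insert the large-order expansions $I_n(x)\approx(x/2)^n/n!$ and $K_n(x)\approx\tfrac12(n-1)!(2/x)^n$ (leading terms of the series, rigorously controlled by the positivity and monotonicity lemmas for modified Bessel functions announced before the theorem). Substituting into (\ref{2.10}) collapses the dominant factors and yields $|\rho|=O\bigl(r_1^{2n}/((n-1)!\,n!)\bigr)$; since $I_n'(1)\asymp 2^{-n}/(n-1)!$ and $|\rho K_n'(1)|=o(I_n'(1))$, the denominator of $m_n$ stays comparable to $-(I_n'(1))^2$, and one finds $(1+n^2)^{1/2}|m_n|=O(r_1^{2n})$. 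Because $r_1<1$ is fixed (part (1)) or tends to $0$ (part (2)), this tail bound is geometrically small and, crucially, holds uniformly in $\sigma_1$ near $1$ and in $r_1$ near $0$; hence the tail contributes less than any prescribed $\varepsilon$ once $N$ is large.

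Finally, for the finite block I would establish the pointwise limits $\rho(r_1,\sigma_1)\to 0$. For part (1) this is exact rather than asymptotic: setting $\sigma_1=1$ makes the numerator of (\ref{2.10}) equal to $I_n'(r_1)I_n(r_1)-I_n(r_1)I_n'(r_1)=0$, so $\rho=0$ and $m_n=0$; continuity of (\ref{2.10}) in $\sigma_1$ then gives $m_n\to0$ as $\sigma_1\to1$ for each of the finitely many $n$. For part (2), the small-argument asymptotics give $\rho=O(r_1^{2n})$ for $n\ge1$ and $\rho=O(r_1^{2})$ for $n=0$, so again $m_n\to0$ as $r_1\to0$ for each fixed $n$. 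Taking the maximum over the finite set $|n|\le N$ and combining with the tail estimate yields both limits. The main obstacle is precisely the uniformity of the tail bound: I must control the supremum over all $n$ simultaneously while the limiting parameter moves, which is exactly where rigorous (not merely asymptotic) inequalities for $I_n,K_n$ and their derivatives are needed, both to keep the denominator $\rho K_n'(1)-I_n'(1)$ bounded away from zero and to justify interchanging the limit with $\sup_n$.
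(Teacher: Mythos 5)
Your proposal is correct and, in substance, follows the same route as the paper --- a direct estimate of the multiplier symbol of $R_{\sigma_1,r_1}-R$ --- but it supplies essentially all of the content that the paper's own proof omits. The paper disposes of part (1) in one sentence (``a straightforward consequence of the definitions of operator norm'') and proves part (2) only by quoting the small-argument expansions (\ref{2.16})--(\ref{n0}) and asserting that combining them with the definition of the norm gives the result; at no point does it reduce the symbol difference to a manageable form or address the fact that the operator norm is a supremum over all frequencies $n$, which must be controlled uniformly as the parameter moves. Your two additions are exactly what is needed to make this rigorous: the Wronskian identity $I_n'(1)K_n(1)-I_n(1)K_n'(1)=1$ (which is precisely (\ref{2.28}) at $x=1$) collapsing the symbol difference to $m_n=\rho/\bigl(I_n'(1)(\rho K_n'(1)-I_n'(1))\bigr)$, so that everything hinges on the single quantity $\rho(r_1,\sigma_1)$ --- which vanishes identically at $\sigma_1=1$, giving part (1) cleanly --- and the finite-block/tail splitting with the large-order bound $(1+n^2)^{1/2}|m_n|=O(r_1^{2n})$, uniform in the parameter, to justify interchanging the limit with $\sup_n$. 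Two caveats: the tail estimate should be backed by genuine two-sided bounds on $I_n$, $K_n$ and their derivatives at fixed argument and large order (these are standard, e.g.\ from the series representations), not by Lemma \ref{lem1} or Corollary \ref{cor1}, which concern the ratios $I_{\nu+1}/I_\nu$ and are not directly the inequalities needed there; and the $n=0$ case of part (2) requires the logarithmic behavior of $K_0$ in (\ref{n0}) to be treated separately. With those details filled in, your argument is strictly more complete than the one printed in the paper.
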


\begin{proof}

(1)\ It is a straightforward consequence of the definitions of operator norm for the ND map $R_{\sigma_1,r_1}$ and $R$.

(2)\ Notice that the following asymptotic behavior of modified Bessel function \cite{Abramowitz1964}: For $n\geq1$, we have
\begin{eqnarray}\label{2.16}
\begin{cases}
I_n(r)=\frac{1}{2^{n}n!}r^n+\frac{1}{2^{n+2}(n+1)!}r^{n+2}+O(r^{n+3});\\
I_n'(r)=\frac{1}{2^{n}(n-1)!}r^{n-1}+\frac{n+2}{2^{n+2}(n+1)!}r^{n+1}+O(r^{n+2});\\
K_n(r)=\frac{2^{n-1}(n-1)!}{r^{n}}-\frac{2^{n-3}(n-2)!}{r^{n-2}}+O(\frac{1}{r^{n-3}});\\
K_n'(r)=-\frac{2^{n-1}n!}{r^{n+1}}+\frac{2^{n-3}(n-2)(n-2)!}{r^{n-1}}+O(\frac{1}{r^{n-2}}).\\
\end{cases}
\end{eqnarray}
For $n=0$, we have
\begin{eqnarray}\label{n0}
\begin{cases}
I_0(r)=1+\frac{\frac{1}{4}r^2}{(1!)^2}+o(r^2);\\
I_0'(r)=\frac{1}{2}r+\frac{\frac{1}{4}r^3}{(2!)^2}+o(r^3);\\
K_0(r)=-\{\ln(\frac{1}{2}r)+\gamma\}I_0(r)+\frac{\frac{1}{4}r^2}{(1!)^2}+o(r^2);\\
K_0'(r)=-\frac{1}{r}I_0(r)-\{\ln(\frac{1}{2}r)+\gamma\}I_1(r)-o(r).\\
\end{cases}
\end{eqnarray}
A combination of (\ref{2.16}-\ref{n0}) and the definitions of operator norm for the ND map yields (\ref{2.151}).
\end{proof}

For the uniqueness, we will need the following important results from the theory of modified bessel function. The following lemma was given in \cite{Diego2016} (page 1236).
\begin{lem}\label{lem1}
For $\nu\geq0, \alpha\geq1$ and $x>0$ the following holds:
\begin{equation}\label{2.17}
\frac{I_{\nu+1}(x)}{I_\nu(x)}>\frac{x}{\lambda+\sqrt{\lambda^2+x^2}}, \lambda=\nu+1+\frac{1}{2}(\alpha-1)
\end{equation}
\end{lem}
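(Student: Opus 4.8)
The plan is to reduce everything to the logarithmic-derivative ratio $h_\nu(x) = I_{\nu+1}(x)/I_\nu(x)$ and to recognize the right-hand side as the positive root of a quadratic. Writing $b(x) = \frac{x}{\lambda + \sqrt{\lambda^2+x^2}}$ and rationalizing gives $b = \frac{\sqrt{\lambda^2+x^2}-\lambda}{x}$, which is exactly the positive solution of $b^2 + \frac{2\lambda}{x}b - 1 = 0$; equivalently $b$ obeys the fixed-point relation $b = \big(\frac{2\lambda}{x}+b\big)^{-1}$ and the algebraic identity $1 - b^2 = \frac{2\lambda}{x}b$. Thus the claim $h_\nu > b$ is a comparison between $h_\nu$ and an explicit elementary function, and the natural tool is a differential (Riccati) comparison.

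First I would derive the Riccati equation satisfied by $h_\nu$. Using the standard recurrences $I_\nu' = \frac{\nu}{x}I_\nu + I_{\nu+1}$ and $I_{\nu+1}' = I_\nu - \frac{\nu+1}{x}I_{\nu+1}$, direct differentiation of the quotient yields
\[
h_\nu'(x) = 1 - \frac{2\nu+1}{x}\,h_\nu(x) - h_\nu(x)^2 =: F(x, h_\nu(x)).
\]
Next I would show $b$ is a strict subsolution of this equation. Differentiating the explicit formula gives $b' = \frac{\lambda}{x\sqrt{\lambda^2+x^2}}\,b$, and substituting the identity $1 - b^2 = \frac{2\lambda}{x}b$ into $F(x,b)$ collapses the computation to
\[
b'(x) - F(x,b(x)) = \frac{b}{x}\Big(\frac{\lambda}{\sqrt{\lambda^2+x^2}} - \alpha\Big),
\]
where the crucial cancellation uses $2\lambda = 2\nu + 1 + \alpha$. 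Since $\sqrt{\lambda^2+x^2} > \lambda$ forces $\lambda/\sqrt{\lambda^2+x^2} < 1 \le \alpha$, the right side is strictly negative, so $b' < F(x,b)$ for all $x>0$.

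Finally I would run a first-zero comparison argument on $\delta = h_\nu - b$. Subtracting the two relations and factoring gives $\delta' > -P(x)\,\delta$ with $P(x) = \frac{2\nu+1}{x} + h_\nu + b > 0$. The small-argument asymptotics $I_\mu(x)\sim (x/2)^\mu/\Gamma(\mu+1)$ give $h_\nu \sim \frac{x}{2(\nu+1)}$ and $b \sim \frac{x}{2\lambda}$; since $\lambda \ge \nu+1$ this already yields $\delta > 0$ for small $x$ when $\alpha > 1$. If $\delta$ had a first zero $x_1 > 0$ with $\delta > 0$ on $(0,x_1)$, then $\delta'(x_1) \le 0$, contradicting $\delta'(x_1) > -P(x_1)\delta(x_1) = 0$; hence $\delta > 0$ on all of $(0,\infty)$, which is the assertion. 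The main obstacle I anticipate is the borderline case $\alpha = 1$ (so $\lambda = \nu+1$), where the leading small-$x$ terms of $h_\nu$ and $b$ coincide; there one must expand to the next order, where $h_\nu \approx \frac{x}{2(\nu+1)} - \frac{x^3}{8(\nu+1)^2(\nu+2)}$ and $b \approx \frac{x}{2(\nu+1)} - \frac{x^3}{8(\nu+1)^3}$, and check that the difference is positive. Since the subsolution inequality stays strict even at $\alpha=1$, once the correct sign near $0$ is secured the comparison argument closes the proof uniformly in $\alpha \ge 1$.
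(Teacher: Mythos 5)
Your argument is correct, and it is worth noting that the paper does not actually prove this lemma at all: it is quoted verbatim from the reference of Ruiz-Antol\'{\i}n and Segura (cited as \cite{Diego2016}, p.~1236), so you have supplied a self-contained proof where the paper offers only a citation. Your route --- deriving the Riccati equation $h_\nu'=1-\frac{2\nu+1}{x}h_\nu-h_\nu^2$ for $h_\nu=I_{\nu+1}/I_\nu$, verifying via the algebraic identity $1-b^2=\frac{2\lambda}{x}b$ and $2\lambda=2\nu+1+\alpha$ that $b$ is a strict subsolution with defect $\frac{b}{x}\bigl(\frac{\lambda}{\sqrt{\lambda^2+x^2}}-\alpha\bigr)<0$, and closing with a first-zero comparison seeded by the small-$x$ asymptotics --- is essentially the Riccati-comparison mechanism that underlies the cited reference, and every step checks out: the recurrences, the computation $F(x,b)=\frac{\alpha}{x}b$, the positivity of $P=\frac{2\nu+1}{x}+h_\nu+b$, and the borderline case $\alpha=1$, where the cubic coefficients $-\frac{1}{8(\nu+1)^2(\nu+2)}$ versus $-\frac{1}{8(\nu+1)^3}$ do give $\delta>0$ near the origin since $\nu+2>\nu+1$. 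The only point I would make explicit in a final write-up is the contradiction at a putative first zero $x_1$: there $h_\nu(x_1)=b(x_1)$ forces $\delta'(x_1)=F(x_1,b(x_1))-b'(x_1)>0$, which is incompatible with $\delta$ decreasing through zero, so no appeal to an integrating factor is even needed. What your approach buys, relative to the paper, is that the key monotonicity input of Lemma 2.3 and Corollaries 2.4--2.5 now rests on an elementary ODE argument rather than on an external citation.
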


\begin{lem}\label{lem2}
For $\eta>0, \alpha\geq1$ and $r>0$ the function:
\begin{equation}\label{2.18}
F(\eta)=\eta^\alpha\frac{I_\nu'(\eta^{-1}r)}{I_\nu(\eta^{-1}r)},
\end{equation}
is strictly is monotonically increasing with respect to $\eta$.
\end{lem}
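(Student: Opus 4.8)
The plan is to reduce the claimed monotonicity of $F$ to a single pointwise inequality for the logarithmic derivative $Q(x):=I_\nu'(x)/I_\nu(x)$, and then to establish that inequality by combining the Riccati equation satisfied by $Q$ with the lower bound supplied by Lemma \ref{lem1}. First I would differentiate $F$ directly. Writing $x=\eta^{-1}r$ and using $\tfrac{d}{d\eta}(\eta^{-1}r)=-\eta^{-2}r=-\eta^{-1}x$, a short computation gives
\[
F'(\eta)=\alpha\,\eta^{\alpha-1}Q(x)-\eta^{\alpha-1}x\,Q'(x)=\eta^{\alpha-1}\bigl[\alpha Q(x)-xQ'(x)\bigr].
\]
Since $\eta^{\alpha-1}>0$ and $Q(x)>0$ for $x>0$ (as $I_\nu,I_\nu'>0$ there), it suffices to prove $\alpha Q(x)-xQ'(x)>0$ for every $x>0$.

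Next I would eliminate the derivative $Q'$ using the modified Bessel equation $x^2I_\nu''+xI_\nu'-(x^2+\nu^2)I_\nu=0$. Dividing by $I_\nu$ and writing $Q'=I_\nu''/I_\nu-Q^2$ yields the Riccati relation $Q'=1+\nu^2/x^2-Q/x-Q^2$, equivalently $xQ'=x+\nu^2/x-Q-xQ^2$. Substituting this into the target quantity turns it into a quadratic expression in $Q$,
\[
\alpha Q-xQ'=xQ^2+(\alpha+1)Q-\Bigl(x+\tfrac{\nu^2}{x}\Bigr).
\]
This parabola opens upward and has exactly one positive root
$Q_+=\tfrac{1}{2x}\bigl(-(\alpha+1)+\sqrt{(\alpha+1)^2+4(x^2+\nu^2)}\bigr)$, so the inequality $\alpha Q-xQ'>0$ is equivalent to the single estimate $Q(x)>Q_+$.

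To produce a lower bound for $Q$ I would invoke the recurrence $I_\nu'=I_{\nu+1}+\tfrac{\nu}{x}I_\nu$, giving $Q=\tfrac{\nu}{x}+\tfrac{I_{\nu+1}}{I_\nu}$, and then apply Lemma \ref{lem1} with the designated value $\lambda=\nu+1+\tfrac12(\alpha-1)=\nu+\tfrac{\alpha+1}{2}$. After rationalizing $\tfrac{x}{\lambda+\sqrt{\lambda^2+x^2}}=\tfrac{1}{x}\bigl(\sqrt{\lambda^2+x^2}-\lambda\bigr)$ and using $\nu-\lambda=-\tfrac{\alpha+1}{2}$, this collapses neatly to
\[
Q(x)>\frac{-\tfrac{\alpha+1}{2}+\sqrt{\lambda^2+x^2}}{x}=:L(x).
\]
The final step is the comparison $L\ge Q_+$: both expressions have the identical form $\tfrac{1}{x}\bigl(-\tfrac{\alpha+1}{2}+\sqrt{\,\cdot\,}\bigr)$, and the difference of the two radicands is $\lambda^2+x^2-\bigl(\tfrac{(\alpha+1)^2}{4}+x^2+\nu^2\bigr)=\nu(\alpha+1)\ge 0$. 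Hence $L\ge Q_+$, so $Q>L\ge Q_+$ and the strict inequality $\alpha Q-xQ'>0$ follows, completing the proof.

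The step I expect to be decisive is the matching of $L$ against $Q_+$. It is not accidental that it works: the extra term $\tfrac12(\alpha-1)$ in the parameter $\lambda$ of Lemma \ref{lem1} is precisely calibrated to the exponent $\alpha$ appearing in $F$, and it is exactly this tuning that lets the Lemma \ref{lem1} bound clear the threshold $Q_+$ coming from the Riccati quadratic. The only nonroutine verification is the sign of $\nu(\alpha+1)$, which is nonnegative because $\nu\ge 0$ and $\alpha\ge 1$; everything else is algebraic rearrangement.
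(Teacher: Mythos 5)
Your proof is correct, and it rests on the same two pillars as the paper's: a Riccati--type identity for a ratio of modified Bessel functions, and Lemma \ref{lem1} with the parameter $\lambda=\nu+1+\tfrac12(\alpha-1)$ calibrated to the exponent $\alpha$. The organization, however, is genuinely different. The paper substitutes $x=\eta^{-1}r$, writes $F=r^{\alpha}h(x)$ with $h(x)=x^{-\alpha}I_\nu'(x)/I_\nu(x)$, splits $h(x)=\nu x^{-\alpha-1}+x^{-\alpha}I_{\nu+1}(x)/I_\nu(x)$, and proves each summand is decreasing separately: the first by inspection (using $\nu\ge0$), the second by differentiating, using the recurrences to get $h_0'(x)=x^{-\alpha}\bigl(1-\tfrac{2\lambda}{x}t-t^{2}\bigr)$ with $t=I_{\nu+1}/I_\nu$, and showing this is negative via Lemma \ref{lem1}. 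You instead differentiate in $\eta$ directly, eliminate $Q'$ through the modified Bessel ODE, and compare $Q=I_\nu'/I_\nu$ with the positive root $Q_+$ of the resulting quadratic. The two computations are in fact equivalent up to a rearrangement: substituting $Q=\nu/x+t$ into your quadratic $xQ^{2}+(\alpha+1)Q-(x+\nu^{2}/x)$ gives $-x\bigl(1-\tfrac{2\lambda}{x}t-t^{2}\bigr)+\nu(\alpha+1)/x$, so your inequality $Q>Q_+$ is the paper's $h_1<0$ weakened by exactly the margin $\nu(\alpha+1)$ that reappears as the difference of radicands in your comparison $L\ge Q_+$, while the paper spends that same margin on the separate monotonicity of $\nu x^{-\alpha-1}$. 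What your version buys is a single self-contained inequality with no splitting of $F$, at the cost of one extra algebraic comparison of square roots; what the paper's version buys is avoiding the second-order ODE entirely, working only with the first-order recurrences. Both are complete proofs.
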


\begin{proof}
Firstly, changing variables $x=\eta^{-1}r$, then $F(\eta)=\eta^\alpha\frac{I_\nu'(\eta^{-1}r)}{I_\nu(\eta^{-1}r)}$ is converted to
\begin{equation}\label{2.181}
H(x)=r^{\alpha}x^{-\alpha}\frac{I_\nu'(x)}{I_\nu(x)}=r^{\alpha}h(x),
\end{equation}
here $h(x)=x^{-\alpha}\frac{I_\nu'(x)}{I_\nu(x)}$, hence we only need to prove $h(x)$ is monotone with respect to $x$.\\
Next by $xI_{\nu}'(x)-\nu I_{\nu}(x)=xI_{\nu+1}(x)$, we derive that
\begin{equation}\label{2.19}
h(x)=\frac{\nu}{x^{\alpha+1}}+x^{-\alpha}\frac{I_{\nu+1}(x)}{I_{\nu}(x)}.
\end{equation}
Again set $h_0(x)=x^{-\alpha}\frac{I_{\nu+1}(x)}{I_\nu(x)}$, by direct calculation using $xI_{\nu}'(x)+\nu I_{\nu}(x)=xI_{\nu-1}(x)$ and $xI_{\nu}'(x)-\nu I_{\nu}(x)=xI_{\nu+1}(x)$, we can obtain that
\begin{equation}\label{2.191}
h_0'(x)=x^{-\alpha}(1-\frac{2\lambda}{x}\frac{I_{\nu+1}(x)}{I_{\nu}(x)}-(\frac{I_{\nu+1}(x)}{I_{\nu}(x)})^2),
\end{equation}
Finally, let $h_1(x)=1-\frac{2\lambda}{x}\frac{I_{\nu+1}(x)}{I_\nu(x)}-(\frac{I_{\nu+1}(x)}{I_{\nu}(x)})^2$, using lemma \ref{lem1} we have
\begin{align*}\label{2.192}
h_1(x)&=1-\frac{2\lambda}{x}\frac{I_{\nu+1}(x)}{I_{\nu}(x)}-(\frac{I_{\nu+1}(x)}{I_{\nu}(x)})^2\\
&<1-\frac{2\lambda}{x}\frac{x}{\lambda+\sqrt{\lambda^2+x^2}}-(\frac{x}{\lambda+\sqrt{\lambda^2+x^2}})^2\\
&<\frac{1}{(\lambda+\sqrt{\lambda^2}+x^2)^2}((\lambda+\sqrt{\lambda^2+x^2})^2-2\lambda(\lambda+\sqrt{\lambda^2+x^2})-x^2)=0,
\end{align*}
So, $h_0'(x)<0$, then $h_0(x)$ is monotonically decreasing. Moreover, $\frac{\nu}{x^{\alpha+1}}$ is also monotonically decreasing, hence $h(x)$ is monotonically decreasing so that we have $H(x)$ is monotonically decreasing. However, notice the monotonicity of $F(\eta)$ with respect to $\eta$ is opposite to that of $H(x)$ on $x$, therefore $F(\eta)$ is strictly monotonically increasing with respect to $\eta$ as asserted.
\end{proof}

The following corollaries are quite important for proving the uniqueness, which are used in section (2) and section (3), respectively.
\begin{cor}\label{cor1}
For $\nu=n$, $\eta>0$ and $\alpha=2$, the function:
\begin{equation}\label{2.182}
F(\eta)=\eta^2\frac{I_n'(\eta^{-1}r)}{I_n(\eta^{-1}r)}
\end{equation}
is strictly monotonically increasing with respect to $\eta$.
\end{cor}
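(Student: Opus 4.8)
The plan is to recognize that Corollary~\ref{cor1} is nothing more than the specialization of Lemma~\ref{lem2} obtained by fixing the two free parameters. Lemma~\ref{lem2} asserts that for every $\eta>0$, every $r>0$, and every exponent $\alpha\geq1$, the map $\eta\mapsto\eta^{\alpha}I_\nu'(\eta^{-1}r)/I_\nu(\eta^{-1}r)$ is strictly increasing, the order $\nu$ being subject to the standing hypothesis $\nu\geq0$ inherited from Lemma~\ref{lem1}. Corollary~\ref{cor1} is exactly this statement with the particular choices $\alpha=2$ and $\nu=n$, so the entire argument consists of checking that these choices are admissible and then invoking the lemma.

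First I would confirm that the chosen values lie in the allowed ranges. The exponent $\alpha=2$ satisfies $\alpha\geq1$, which is the hypothesis used in the sign computation $h_1(x)<0$ of Lemma~\ref{lem2} (through the quantity $\lambda=\nu+1+\tfrac12(\alpha-1)$ of Lemma~\ref{lem1}). The order $\nu=n$ satisfies $\nu\geq0$ because $n$ is a nonnegative integer indexing the modified Bessel functions $I_n$, so Lemma~\ref{lem1} applies verbatim at $\nu=n$. With both parameter checks in place, the hypotheses of Lemma~\ref{lem2} are met exactly, and the conclusion of that lemma—strict monotonic increase in $\eta$—is precisely the assertion of the corollary for $F(\eta)=\eta^{2}I_n'(\eta^{-1}r)/I_n(\eta^{-1}r)$.

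Since the corollary is an immediate instance of an already-established lemma, there is no genuine obstacle; the only point deserving attention is the bookkeeping of parameter ranges, namely that $n\geq0$ guarantees $\nu\geq0$ so that the monotonicity mechanism of Lemma~\ref{lem2} (and the ratio bound of Lemma~\ref{lem1} on which it rests) is available at $\nu=n$. Once this is noted, the proof reduces to a single line citing Lemma~\ref{lem2} with $(\alpha,\nu)=(2,n)$. If desired, one could instead reproduce the chain $H(x)=r^{2}h(x)$, $h(x)=\nu x^{-3}+x^{-2}I_{\nu+1}(x)/I_\nu(x)$, and $h_0'(x)<0$ directly for these fixed values, but this merely repeats the computation already carried out in full generality, so the citation is the most economical route.
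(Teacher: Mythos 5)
Your proposal is correct and matches the paper's (implicit) argument: the corollary is stated without proof precisely because it is the instance $(\alpha,\nu)=(2,n)$ of Lemma~\ref{lem2}, and your parameter checks $\alpha=2\geq1$ and $\nu=n\geq0$ are exactly what is needed (for negative integer indices in the Fourier expansion one uses $I_{-n}=I_n$ to reduce to $\nu=|n|\geq0$).
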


\begin{cor}\label{cor2}
For $\nu=n+\frac{1}{2}$, $\eta>0$ and $\alpha=2$, the function:
\begin{equation}\label{2.183}
F(\eta)=\eta^2\frac{I_{n+\frac{1}{2}}'(\eta^{-1}r)}{I_{n+\frac{1}{2}}(\eta^{-1}r)}
\end{equation}
is strictly monotonically increasing with respect to $\eta$.
\end{cor}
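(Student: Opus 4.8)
The plan is to recognize that Corollary \ref{cor2} is precisely the specialization of Lemma \ref{lem2} to the parameter values $\nu = n + \frac{1}{2}$ and $\alpha = 2$. Accordingly, rather than repeating the differentiation argument, I would verify that this choice of parameters meets the hypotheses of Lemma \ref{lem2} and then invoke that lemma directly.

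First I would check the admissible range. The proof of Lemma \ref{lem2} rests on the lower bound of Lemma \ref{lem1}, which is valid for $\nu \geq 0$ and $\alpha \geq 1$. In the situation of the corollary the order $\nu = n + \frac{1}{2}$ arises (in the three-dimensional setting of Section 3) from the spherical-harmonic expansion of the matter wave, so $n$ is a nonnegative integer; hence $\nu = n + \frac{1}{2} \geq \frac{1}{2} > 0$ and $\alpha = 2 \geq 1$. Both constraints are therefore satisfied, and Lemma \ref{lem2} applies with these values without modification.

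If one instead wishes to exhibit the argument concretely, I would carry the computation in the proof of Lemma \ref{lem2} through with these parameters. The recurrence relations $x I_\nu'(x) \pm \nu I_\nu(x) = x I_{\nu \mp 1}(x)$ employed there hold for every real order, so they remain available for the half-integer order $\nu = n + \frac{1}{2}$. Substituting $\alpha = 2$ gives $\lambda = \nu + 1 + \frac{1}{2}(\alpha - 1) = n + 2$, and the sign estimate showing $h_1(x) < 0$ via Lemma \ref{lem1} goes through unchanged, yielding $h_0'(x) < 0$ and then the reversal of monotonicity between $H(x)$ and $F(\eta)$ that establishes strict monotone increase of $F$.

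The statement presents no genuine obstacle: the only point requiring care is to confirm that the half-integer order $\nu = n + \frac{1}{2}$ together with $\alpha = 2$ lies inside the hypotheses of Lemma \ref{lem1}, which it does since $n \geq 0$. Everything else is inherited directly from the already-established Lemma \ref{lem2}.
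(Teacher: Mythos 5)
Your proposal is correct and matches the paper's treatment: the corollary is stated there without a separate proof precisely because it is the direct specialization of Lemma \ref{lem2} to $\nu=n+\frac{1}{2}$, $\alpha=2$, and your verification that these values satisfy the hypotheses of Lemmas \ref{lem1} and \ref{lem2} (with $\lambda=n+2$) is exactly the point that needs checking.
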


In order to get the uniqueness and nonuniqueness for piecewise constant potential reconstruction in 2-D core-shell structure, we introduce the notations from \cite{Jaeger1941}:
\begin{align}
&D(x,y)=I_\nu(x)K_\nu(y)-K_\nu(x)I_\nu(y)\label{2.23},\\
&D_{r,s}(x,y)=\frac{\partial^{r+s}}{\partial x^r\partial y^s}D(x,y)\label{2.24}.
\end{align}

The following properties are trivial, and also from \cite{Jaeger1941}.
\begin{align}
&D_{1,0}(x,x)=x^{-1},\label{2.28}\\
&D_{0,1}(x,y)=-D_{1,0}(y,x),\\
&D(x,y)D_{1,0}(x,z)-D(x,z)D_{1,0}(x,y)=x^{-1}D(z,y),\\
&D(x,y)D_{1,1}(x,z)-D_{0,1}(x,z)D_{1,0}(x,y)=x^{-1}D_{1,0}(z,y),\\
&D_{1,1}(x,y)D_{0,1}(x,z)-D_{0,1}(x,y)D_{1,1}(x,z)=-x^{-1}D_{1,1}(z,y)\label{2.32}
\end{align}

Consider $\nu=n$ in (\ref{2.23})-(\ref{2.24}), then the ND map can be rewritten as
\begin{equation}\label{2.27}
R_{\sigma_1,r_1}(g)=\sum\limits_{n=-\infty}^{\infty}\frac{I_n\left(\frac{r_1}{\sqrt{\sigma_1}}\right)D_{0,1}(1,r_1)-\sigma_1I_n'\left(\frac{r_1}
{\sqrt{\sigma_1}}\right)D(1,r_1)}{I_n\left(\frac{r_1}{\sqrt{\sigma_1}}\right)D_{1,1}(1,r_1)-\sigma_1I_n'\left(\frac{r_1}
{\sqrt{\sigma_1}}\right)D_{1,0}(1,r_1)}g_ne^{in\phi}.
\end{equation}

Then, by using Corollary \ref{cor1}, we can prove that the following uniqueness result holds.
\begin{thm}\label{th2.2}
(Uniqueness) For arbitrary fixed $r_1\in(0,1)$, and any $\sigma_j>0$, $j=1,2$, assume that $R_{\sigma_1,r_1}=R_{\sigma_2,r_1}$. Then,
\begin{equation}\label{2.33}
\sigma_1=\sigma_2.
\end{equation}
\end{thm}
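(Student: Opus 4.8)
The plan is to use that $R_{\sigma,r_1}$ is a Fourier multiplier operator (cf. (\ref{2.11}), (\ref{2.27})), so that the operator equality $R_{\sigma_1,r_1}=R_{\sigma_2,r_1}$ is equivalent to the coincidence of their symbols for every mode. Fix an arbitrary nonnegative integer $n$ and, reading the $n$-th symbol off (\ref{2.27}), set
\[
\mu_n(\sigma)=\frac{I_n(r_1/\sqrt{\sigma})\,D_{0,1}(1,r_1)-\sigma\,I_n'(r_1/\sqrt{\sigma})\,D(1,r_1)}{I_n(r_1/\sqrt{\sigma})\,D_{1,1}(1,r_1)-\sigma\,I_n'(r_1/\sqrt{\sigma})\,D_{1,0}(1,r_1)},
\]
so that the hypothesis gives $\mu_n(\sigma_1)=\mu_n(\sigma_2)$. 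The key observation is that $\sigma$ enters $\mu_n$ only through the single scalar
\[
T(\sigma)=\sigma\,\frac{I_n'(r_1/\sqrt{\sigma})}{I_n(r_1/\sqrt{\sigma})},
\]
which is well defined since $I_n>0$ on $(0,\infty)$; dividing numerator and denominator of $\mu_n$ by $I_n(r_1/\sqrt{\sigma})$ turns it into the M\"obius function
\[
\mu_n=\frac{D_{0,1}(1,r_1)-T\,D(1,r_1)}{D_{1,1}(1,r_1)-T\,D_{1,0}(1,r_1)}.
\]
It therefore suffices to show that $T\mapsto\mu_n$ and $\sigma\mapsto T(\sigma)$ are both injective.

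For the injectivity of $T\mapsto\mu_n$ I would check that this M\"obius map is nondegenerate, i.e.\ that the quantity (its determinant up to sign)
\[
D(1,r_1)\,D_{1,1}(1,r_1)-D_{0,1}(1,r_1)\,D_{1,0}(1,r_1)
\]
does not vanish. Specializing the relation $D(x,y)D_{1,1}(x,z)-D_{0,1}(x,z)D_{1,0}(x,y)=x^{-1}D_{1,0}(z,y)$ (the fourth identity among (\ref{2.28})--(\ref{2.32})) to $x=1$, $y=z=r_1$ gives the value $D_{1,0}(r_1,r_1)=r_1^{-1}$ by (\ref{2.28}). Since $r_1^{-1}\neq0$, the map $\mu_n$ is a genuinely non-constant M\"obius function of $T$, hence injective in $T$; consequently $\mu_n(\sigma_1)=\mu_n(\sigma_2)$ forces $T(\sigma_1)=T(\sigma_2)$.

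For the injectivity of $\sigma\mapsto T(\sigma)$ I would put $\eta=\sqrt{\sigma}$, so that
\[
T(\sigma)=\eta^2\,\frac{I_n'(\eta^{-1}r_1)}{I_n(\eta^{-1}r_1)}=F(\eta)
\]
is exactly the function of Corollary \ref{cor1} (with $\nu=n$, $\alpha=2$, $r=r_1$). That corollary asserts $F$ is strictly monotonically increasing, hence injective in $\eta$; since $\sigma\mapsto\sqrt{\sigma}$ is injective on $(0,\infty)$, so is $\sigma\mapsto T(\sigma)$. Combining the two steps, $T(\sigma_1)=T(\sigma_2)$ yields $\sqrt{\sigma_1}=\sqrt{\sigma_2}$, i.e.\ $\sigma_1=\sigma_2$, which is (\ref{2.33}).

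I expect the crux to be the reduction in the first paragraph together with the determinant computation: one must recognize that the entire $\sigma$-dependence collapses onto the single ratio $T$, and then identify the resulting M\"obius determinant with a Wronskian-type quantity that the relations (\ref{2.28})--(\ref{2.32}) certify to equal $r_1^{-1}\neq0$. Once this structural point is in place, the monotonicity half is immediate from Corollary \ref{cor1}. The only routine caveats are to note that it suffices to treat $n\geq0$ (since $I_{-n}=I_n$, $K_{-n}=K_n$ make the symbols for $\pm n$ coincide) and that $I_n(r_1/\sqrt{\sigma})>0$, so that the passage to $T$ is legitimate.
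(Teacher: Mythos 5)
Your proof is correct and follows essentially the same route as the paper: equate the Fourier multiplier symbols, reduce via the identities (\ref{2.28})--(\ref{2.32}) to the equality $\sigma_1 I_n'(r_1/\sqrt{\sigma_1})/I_n(r_1/\sqrt{\sigma_1})=\sigma_2 I_n'(r_1/\sqrt{\sigma_2})/I_n(r_1/\sqrt{\sigma_2})$, and conclude by the monotonicity of Corollary \ref{cor1}. The only difference is that you make explicit the ``straightforward calculation'' the paper omits, by exhibiting the symbol as a M\"obius function of $T(\sigma)$ whose determinant equals $D_{1,0}(r_1,r_1)=r_1^{-1}\neq0$, which is a welcome clarification rather than a different argument.
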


\begin{proof}
Since $R_{\sigma_1,r_1}=R_{\sigma_2,r_1}$, by expression formula (\ref{2.27}), we find
\begin{align*}
&\frac{I_n\left(\frac{r_1}{\sqrt{\sigma_1}}\right)D_{0,1}(1,r_1)-\sigma_1I_n'\left(\frac{r_1}
{\sqrt{\sigma_1}}\right)D(1,r_1)}{I_n\left(\frac{r_1}{\sqrt{\sigma_1}}\right)D_{1,1}(1,r_1)-\sigma_1I_n'\left(\frac{r_1}
{\sqrt{\sigma_1}}\right)D_{1,0}(1,r_1)}\\
=&\frac{I_n\left(\frac{r_1}{\sqrt{\sigma_2}}\right)D_{0,1}(1,r_1)-\sigma_2I_n'\left(\frac{r_1}
{\sqrt{\sigma_2}}\right)D(1,r_1)}{I_n\left(\frac{r_1}{\sqrt{\sigma_2}}\right)D_{1,1}(1,r_1)-\sigma_2I_n'\left(\frac{r_1}
{\sqrt{\sigma_2}}\right)D_{1,0}(1,r_1)}.
\end{align*}
From (\ref{2.28})-(\ref{2.32}), then by straightforward calculation, we derive that
\begin{equation*}
\sigma_1\frac{I_n'(\frac{r_1}{\sqrt{\sigma_1}})}{I_n(\frac{r_1}{\sqrt{\sigma_1}})}
=\sigma_2\frac{I_n'(\frac{r_1}{\sqrt{\sigma_2}})}{I_n(\frac{r_1}{\sqrt{\sigma_2}})},
\end{equation*}
and therefore, by using monotonicity Corollary \ref{cor1}, we conclude $\sigma_1=\sigma_2$. This finishes the proof.
\end{proof}

\subsection{Nonuniqueness}
In this section, we prove the nonuniqueness of potential reconstruction problem in 2-D core-shell structure, when the radius $r_1$ and potential coefficient $\sigma_1$ satisfy some conditions.
\begin{thm}\label{th2.3}
(Non-uniqueness) Suppose that $r_j\in(0,1)$, $\sigma_j>0$, $j=1,2$, furthermore $\{r_1, \sigma_1\}$ and $\{r_2, \sigma_2\}$ satisfy
\begin{equation}\label{2.34}
D(r_1, \sigma_1, r_2, \sigma_2)=0.
\end{equation}
Then, for every $g\in H^{-\frac{1}{2}}(\partial\Omega)$,
\begin{equation}\label{2.35}
R_{\sigma_1,r_1}(g)=R_{\sigma_2,r_2}(g),
\end{equation}
where
\begin{equation}
D(r_1, \sigma_1, r_2, \sigma_2)=
\left|\begin{array}{cccc}
D_{1,0}(r_1,r_2)&\sigma_1I_n'\left(\frac{r_1}
{\sqrt{\sigma_1}}\right)&D_{1,1}(r_1,r_2)\\
I_n\left(\frac{r_2}{\sqrt{\sigma_2}}\right)&0&\sigma_2I_n'\left(\frac{r_2}
{\sqrt{\sigma_2}}\right)\\
D(r_1,r_2)&I_n\left(\frac{r_1}{\sqrt{\sigma_1}}\right)&D_{0,1}(r_1,r_2)
\end{array}\right|.
\end{equation}
\end{thm}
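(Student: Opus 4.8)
The plan is to exploit that both $R_{\sigma_1,r_1}$ and $R_{\sigma_2,r_2}$ are Fourier multiplier operators, diagonal in the basis $\{e^{in\phi}\}$, so that the operator identity (\ref{2.35}) is equivalent to the equality of their symbols for every $n\in\mathbb{Z}$. Writing, for $j=1,2$,
\begin{equation*}
a_j=I_n\!\left(\tfrac{r_j}{\sqrt{\sigma_j}}\right),\qquad b_j=\sigma_j I_n'\!\left(\tfrac{r_j}{\sqrt{\sigma_j}}\right),
\end{equation*}
the representation (\ref{2.27}) shows that the $n$-th symbol of $R_{\sigma_j,r_j}$ is
\begin{equation*}
M_j=\frac{a_j D_{0,1}(1,r_j)-b_j D(1,r_j)}{a_j D_{1,1}(1,r_j)-b_j D_{1,0}(1,r_j)}.
\end{equation*}
Thus it suffices to show that the hypothesis (\ref{2.34}) forces $M_1=M_2$ for each fixed $n$, and I would first reduce the whole problem to this single symbol identity.

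Next I would clear denominators. Since the ND map is well defined, the two denominators do not vanish, so $M_1=M_2$ is equivalent to the bilinear identity obtained by cross-multiplication,
\begin{align*}
&\big(a_1 D_{0,1}(1,r_1)-b_1 D(1,r_1)\big)\big(a_2 D_{1,1}(1,r_2)-b_2 D_{1,0}(1,r_2)\big)\\
=\;&\big(a_2 D_{0,1}(1,r_2)-b_2 D(1,r_2)\big)\big(a_1 D_{1,1}(1,r_1)-b_1 D_{1,0}(1,r_1)\big).
\end{align*}
Expanding both sides and grouping by the monomials $a_1a_2$, $a_1b_2$, $a_2b_1$, $b_1b_2$, each coefficient is a difference of products of $D$, $D_{1,0}$, $D_{0,1}$, $D_{1,1}$ evaluated at the pairs $(1,r_1)$ and $(1,r_2)$; the goal is to collapse these four coefficients into the entries of the asserted determinant.

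The heart of the argument is to carry out this collapse using the J\"ager identities (\ref{2.28})--(\ref{2.32}) at $x=1$. I expect the coefficient of $a_1a_2$, namely $D_{0,1}(1,r_1)D_{1,1}(1,r_2)-D_{0,1}(1,r_2)D_{1,1}(1,r_1)$, to reduce by (\ref{2.32}) to $-D_{1,1}(r_1,r_2)$; the coefficient of $b_1b_2$ to reduce, via the product identity for $D\,D_{1,0}$ together with the antisymmetry $D(r_2,r_1)=-D(r_1,r_2)$, to $-D(r_1,r_2)$; and the two mixed coefficients to reduce, via the identity $D(1,y)D_{1,1}(1,z)-D_{0,1}(1,z)D_{1,0}(1,y)=D_{1,0}(z,y)$ and the relation $D_{0,1}(x,y)=-D_{1,0}(y,x)$, to $D_{1,0}(r_1,r_2)$ and $D_{0,1}(r_1,r_2)$, respectively. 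Substituting these, the difference of the two sides should become
\begin{equation*}
-\big(a_1a_2 D_{1,1}(r_1,r_2)-a_1b_2 D_{1,0}(r_1,r_2)-a_2b_1 D_{0,1}(r_1,r_2)+b_1b_2 D(r_1,r_2)\big),
\end{equation*}
which I would then recognize as $-D(r_1,\sigma_1,r_2,\sigma_2)$ by expanding the stated $3\times3$ determinant along its second column (entries $b_1,0,a_1$), reproducing exactly these four terms with these signs. Hence $D(r_1,\sigma_1,r_2,\sigma_2)=0$ yields $M_1=M_2$, and summing over $n$ gives (\ref{2.35}).

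The main obstacle will be the bookkeeping in this collapse: each of the four bilinear coefficients must be matched to the correct J\"ager identity with the correct orientation of the arguments $y,z\in\{r_1,r_2\}$, and the antisymmetries $D(y,x)=-D(x,y)$ and $D_{0,1}(x,y)=-D_{1,0}(y,x)$ must be applied consistently so that every sign aligns with the cofactor expansion of the determinant. A secondary point is to confirm that the denominators never vanish, so that the cross-multiplication step is genuinely reversible; this I would obtain from the well-posedness of the forward problem (\ref{2.6}) underlying the definition of the ND map.
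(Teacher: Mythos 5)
Your proposal is correct and follows essentially the same route as the paper: reduce the operator identity to equality of the Fourier multiplier symbols for each $n$, cross-multiply, and use the J\"ager identities (\ref{2.28})--(\ref{2.32}) at $x=1$ to show the difference of the two sides equals $-D(r_1,\sigma_1,r_2,\sigma_2)$, recognized via cofactor expansion of the determinant along its second column. The paper's own proof merely asserts this equivalence ``by straightforward calculation''; your write-up supplies the coefficient-by-coefficient collapse (with the correct signs and argument orientations) that the paper omits, and correctly flags the nonvanishing of the denominators needed to make the cross-multiplication reversible.
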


\begin{proof}
Similar to the proof of Theorem \ref{th2.2}. the condition (\ref{2.34}) is equivalent to
\begin{align*}
&\frac{I_n\left(\frac{r_1}{\sqrt{\sigma_1}}\right)D_{0,1}(1,r_1)-\sigma_1I_n'\left(\frac{r_1}
{\sqrt{\sigma_1}}\right)D(1,r_1)}{I_n\left(\frac{r_1}{\sqrt{\sigma_1}}\right)D_{1,1}(1,r_1)-\sigma_1I_n'\left(\frac{r_1}
{\sqrt{\sigma_1}}\right)D_{1,0}(1,r_1)}\\
=&\frac{I_n\left(\frac{r_2}{\sqrt{\sigma_2}}\right)D_{0,1}(1,r_2)-\sigma_2I_n'\left(\frac{r_2}
{\sqrt{\sigma_2}}\right)D(1,r_2)}{I_n\left(\frac{r_2}{\sqrt{\sigma_2}}\right)D_{1,1}(1,r_2)-\sigma_2I_n'\left(\frac{r_2}
{\sqrt{\sigma_2}}\right)D_{1,0}(1,r_2)}.
\end{align*}
Here we have used the properties (\ref{2.28})-(\ref{2.32}). Then, by the definition of ND map, it follows that
\begin{equation*}
R_{\sigma_1,r_1}(g)=R_{\sigma_2,r_2}(g).
\end{equation*}

\end{proof}

\begin{rem}
In fact, (\ref{2.34}) is the sufficient and necessary condition for equation $R_{\sigma_1,r_1}(g)=R_{\sigma_2,r_2}(g)$, here $g\in H^{-\frac{1}{2}}(\partial\Omega)$.
\end{rem}

\section{The potential reconstruction in 3-D core-shell structure}
\subsection{Solution formula and Neumann to Dirichlet map}

In this section, we consider the more general case when $\Omega$ represents 3-D core-shell structure. Similarly to the 2-D case, here the potential we consider is also a piecewise constant function, but the space is in dimension 3. Based on the polar coordinate transformation, we deduce the exact solution formula of (\ref{1.1})-(\ref{1.2}) in 3-D core-shell structure, and define the Neumann to Dirichlet map (ND map).

Similarly to the 2-D case, under the polar coordinates, (\ref{1.1}) in 3-D core-shell structure becomes
\begin{equation}\label{3.4}
\frac{1}{r^2}\frac{\partial}{\partial r}\left(r^2\frac{\partial\psi}{\partial r}\right)+\frac{1}{r^{2}\sin\theta}\frac{\partial}{\partial \theta}\left(\sin\theta\frac{\partial\psi}{\partial \theta}\right)+\frac{1}{r^{2}\sin^{2}\theta}\frac{\partial^{2}\psi}{\partial \phi
^{2}}-\left(\sigma_1^{-1}\chi_{\{r<r_1\}}+1\chi_{\{r_1<r<1\}}\right)\psi=0. \ \ \ \
\end{equation}
where $r\in(0,1)$, $\theta\in(0,\pi)$, and $\phi\in(0,2\pi)$.

Setting $\frac{\partial\psi}{\partial r}\big|_{r=1}=g(\theta,\phi)$, similarly to (\ref{2.5}) the ND map in three dimensions can be represented
by solving problem (\ref{3.4}) with Neumann boundary condition $\frac{\partial\psi}{\partial r}\big|_{r=1}=g(\theta,\phi)$:
\begin{eqnarray}\label{3.6}
\begin{cases}
\frac{1}{r^2}\frac{\partial}{\partial r}\left(r^2\frac{\partial\psi}{\partial r}\right)+\frac{1}{r^{2}\sin\theta}\frac{\partial}{\partial \theta}\left(\sin\theta\frac{\partial\psi}{\partial \theta}\right)+\frac{1}{r^{2}\sin^{2}\theta}\frac{\partial^{2}\psi}{\partial \phi
^{2}}-\sigma_1^{-1}\psi=0,\ \ r\in(0,r_1),\\
\frac{1}{r^2}\frac{\partial}{\partial r}\left(r^2\frac{\partial\psi}{\partial r}\right)+\frac{1}{r^{2}\sin\theta}\frac{\partial}{\partial \theta}\left(\sin\theta\frac{\partial\psi}{\partial \theta}\right)+\frac{1}{r^{2}\sin^{2}\theta}\frac{\partial^{2}\psi}{\partial \phi
^{2}}-\psi=0,\ \ r\in(r_1,1),\\
\psi|_{r=r_1}^+=\psi|_{r=r_1}^-,\\
\frac{\partial\psi}{\partial r}\big|_{r=r_1}^+=\sigma_1\frac{\partial\psi}{\partial r}\big|_{r=r_1}^-,\\
\frac{\partial\psi}{\partial r}|_{r=1}=g(\theta,\phi),\\
\psi|_{r=0}\ \ \text{is bounded}.
\end{cases}
\end{eqnarray}
where $\cdot|_{r=r_1}^+$ means the limit to the outside of $\{r|r=r_1\}$ and $\cdot|_{r=r_1}^-$ means the limit to the inside of $\{r|r=r_1\}$.

To aid our solution of equation (\ref{3.6}), it is convenient to define a new independent variable $\mu$ as $\mu=\cos\theta$, where the domain of $\mu$ is given by $-1\leq\mu\leq1$, which maps to the variable ¦È over the corresponding domain $0\leq\theta\leq\pi$. With this change, the equations (\ref{3.6}) becomes
\begin{eqnarray}\label{3.1}
\begin{cases}
\frac{1}{r^2}\frac{\partial}{\partial r}\left(r^2\frac{\partial\psi}{\partial r}\right)+\frac{1}{r^{2}}\frac{\partial}{\partial \mu}\left((1-\mu^2)\frac{\partial\psi}{\partial \mu}\right)+\frac{1}{r^{2}(1-\mu^2)}\frac{\partial^{2}\psi}{\partial \phi
^{2}}-\sigma_1^{-1}\psi=0,\ \ r\in(0,r_1),\\
\frac{1}{r^2}\frac{\partial}{\partial r}\left(r^2\frac{\partial\psi}{\partial r}\right)+\frac{1}{r^{2}}\frac{\partial}{\partial \mu}\left((1-\mu^2)\frac{\partial\psi}{\partial \mu}\right)+\frac{1}{r^{2}(1-\mu^2)}\frac{\partial^{2}\psi}{\partial \phi
^{2}}-\psi=0,\ \ r\in(r_1,1),\\
\psi|_{r=r_1}^+=\psi|_{r=r_1}^-,\\
\frac{\partial\psi}{\partial r}\big|_{r=r_1}^+=\sigma_1\frac{\partial\psi}{\partial r}\big|_{r=r_1}^-,\\
\frac{\partial\psi}{\partial r}|_{r=1}=g(\theta,\phi),\\
\psi(\mu\rightarrow\pm1)\ \ \text{is bounded},\\
\psi|_{r=0}\ \ \text{is bounded}.
\end{cases}
\end{eqnarray}

By the boundedness of $\psi(0)$ and $\psi(\mu\rightarrow\pm1)$, we can suppose that the matter wave $\psi(r,\theta,\phi)$ has the form
\begin{eqnarray}\label{3.2}
\psi(r,\theta,\phi)=
\begin{cases}
\sum\limits_{n=0}^{\infty}\sum\limits_{m=-n}^{n}u_{nm}r^{-\frac{1}{2}}I_{n+\frac{1}{2}}\left(\frac{r}{\sqrt{\sigma_1}}\right)
P_{n}^{|m|}(\mu)e^{im\phi},\ \ \ \ \ r\in(0,r_1),\\
\sum\limits_{n=0}^{\infty}\sum\limits_{m=-n}^{n}(v_{nm}r^{-\frac{1}{2}}I_{n+\frac{1}{2}}\left(r\right)+w_{nm}r^{-\frac{1}{2}}
K_{n+\frac{1}{2}}\left(r\right))P_{n}^{|m|}(\mu)e^{im\phi}
,\ \ \ \ \ r\in(r_1,1),
\end{cases}
\end{eqnarray}
where $I_{n+\frac{1}{2}}\left(r\right)$ and $K_{n+\frac{1}{2}}\left(r\right)$ $(n\in\mathbb{N})$ denote the $(n+\frac{1}{2})$-th order modified Bessel functions of the first and the second kind, respectively.
$u_{nm}$, $v_{nm}$, $w_{nm}$ are unknown coefficients.

From the transmission conditions on the interface $\{r|r=r_1\}$ and boundary value condition on $\{r|r=1\}$, we have that
\begin{eqnarray}\label{3.8}
\begin{cases}
u_{nm}I_{n+\frac{1}{2}}\left(\frac{r_1}{\sqrt{\sigma_1}}\right)=v_{nm}I_{n+\frac{1}{2}}\left(r_1\right)+w_{nm}K_{n+\frac{1}{2}}\left(r_1\right),\\
u_{nm}\sigma_1I_{n+\frac{1}{2}}'\left(\frac{r_1}{\sqrt{\sigma_1}}\right)=v_{nm}I_{n+\frac{1}{2}}'\left(r_1\right)+w_{nm}K_{n+\frac{1}{2}}'\left(r_1\right),\\
v_{nm}I_{n+\frac{1}{2}}'\left(1\right)+w_{nm}K_{n+\frac{1}{2}}'\left(1\right)=g_{nm},
\end{cases}
\end{eqnarray}
here $g_{nm}=(g(\theta,\phi),P_{n}^{|m|}(\mu)e^{im\phi})$ is Fourier coefficient.

By solving (\ref{3.8}), we can obtain
\begin{eqnarray}\label{3.9}
\begin{cases}
u_{nm}=\frac{\left(\rho(r_1,\sigma_1)K_{n+\frac{1}{2}}(r_1)-I_{n+\frac{1}{2}}(r_1)\right)g_{nm}}{\left(\rho(r_1,\sigma_1)K_{n+\frac{1}{2}}'(1)-I_{n+\frac{1}{2}}'(1)\right)I_{n+\frac{1}{2}}
\left(\frac{r_1}{\sqrt{\sigma_1}}\right)},\\
v_{nm}=-\frac{g_{nm}}{\rho(r_1,\sigma_1)K_{n+\frac{1}{2}}'(1)-I_{n+\frac{1}{2}}'(1)},\\
w_{nm}=\frac{\rho(r_1,\sigma_1)g_{nm}}{\rho(r_1,\sigma_1)K_{n+\frac{1}{2}}'(1)-I_{n+\frac{1}{2}}'(1)},
\end{cases}
\end{eqnarray}
where
\begin{eqnarray}\label{3.10}
\rho(r_1,\sigma_1)=\frac{\sigma_1I_{n+\frac{1}{2}}'\left(\frac{r_1}{\sqrt{\sigma_1}}\right)I_{n+\frac{1}{2}}\left(r_1\right)-I_{n+\frac{1}{2}}\left(\frac{r_1}
{\sqrt{\sigma_1}}\right)I_{n+\frac{1}{2}}'\left(r_1\right)}{\sigma_1I_{n+\frac{1}{2}}'\left(\frac{r_1}{\sqrt{\sigma_1}}\right)K_{n+\frac{1}{2}}\left(r_1\right)-I_{n+\frac{1}{2}}\left(\frac{r_1}
{\sqrt{\sigma_1}}\right)K_{n+\frac{1}{2}}'\left(r_1\right)}.
\end{eqnarray}
Hence, by substituting the coefficient formula (\ref{3.9}) into (\ref{3.2}), we get the solution of problem (\ref{3.1}). Finally, the ND map can be expressed precisely as follows,
\begin{equation}\label{3.11}
R_{\sigma_1,r_1}(g)=\sum\limits_{n=0}^{\infty}\sum\limits_{m=-n}^{n}\frac{\rho(r_1,\sigma_1)K_{n+\frac{1}{2}}(1)-I_{n+\frac{1}{2}}(1)}{\rho(r_1,\sigma_1)K_{n+\frac{1}{2}}'(1)-I_{n+\frac{1}{2}}'(1)}g_{nm}P_{n}^{|m|}(\mu)e^{im\phi}
\end{equation}
Clearly, $R_{\sigma_1,r_1}:\ H^{-\frac{1}{2}}(\partial \Omega)\rightarrow H^{\frac{1}{2}}(\partial \Omega)$ is a multiplier operator, and its operator norm is defined by
\begin{equation}\label{3.111}
\|R_{\sigma_1,r_1}\|=\sup_{g\in {H^{-\frac{1}{2}}(\partial \Omega)}}\frac{\|R_{\sigma_1,r_1}(g)\|_{H^{\frac{1}{2}}(\partial \Omega)}}{\|g\|_{H^{-\frac{1}{2}}(\partial \Omega)}}
\end{equation}
From (\ref{3.11}), it implies

\begin{equation}\label{3.112}
\|R_{\sigma_1,r_1}\|=\sup_{g\in H^{-\frac{1}{2}}(\partial \Omega)}\frac{(\sum\limits_{n=0}^{\infty}\sum\limits_{m=-n}^{n}(1+|m|^2)^{\frac{1}{2}}\big|\frac{\rho(r_1,\sigma_1)K_{n+\frac{1}{2}}
(1)-I_{n+\frac{1}{2}}(1)}{\rho(r_1,\sigma_1)K_{n+\frac{1}{2}}'(1)-I_{n+\frac{1}{2}}'(1)}
\big|^2|g_{nm}|^2)^{\frac{1}{2}}}{(\sum\limits_{n=0}^{\infty}\sum\limits_{m=-n}^{n}(1+|m|^2)^{-\frac{1}{2}}|g_{nm}|^2)^{\frac{1}{2}}}
\end{equation}

Next, we define the following ND map for Schr\"{o}dinger equation in a disk:
\begin{equation}\label{3.12}
R(g)=\Psi|_{r=1},
\end{equation}
where $\Psi$ is the solution to
\begin{eqnarray}\label{3.13}
\begin{cases}
\frac{1}{r^2}\frac{\partial}{\partial r}\left(r^2\frac{\partial\psi}{\partial r}\right)+\frac{1}{r^{2}}\frac{\partial}{\partial \mu}\left((1-\mu^2)\frac{\partial\psi}{\partial \mu}\right)+\frac{1}{r^{2}(1-\mu^2)}\frac{\partial^{2}\psi}{\partial \phi
^{2}}-\psi=0,\ \ \ \ \ r\in(0,1),\\
\frac{\partial\Psi}{\partial r}|_{r=1}=g(\theta,\phi),\\
\Psi|_{r=0}\ \ \text{is bounded}.
\end{cases}
\end{eqnarray}
Similarly, the ND map (\ref{3.12}) can be also represented by
\begin{equation}\label{3.14}
R(g)=\sum\limits_{n=0}^{\infty}\sum\limits_{m=-n}^{n}\frac{I_{n+\frac{1}{2}}(1)}{I_{n+\frac{1}{2}}'(1)}g_{nm}P_{n}^{|m|}(\mu)e^{im\phi}.
\end{equation}

\subsection{Uniqueness}
As before, we mainly establish the uniqueness theorem in 3-D core-shell structure in this section. In addition, we also give the asymptotic property of ND map respect to the radius of core $r_1$ and potential coefficient $\sigma_1$.
\begin{thm}\label{th3.1}
Let $R_{\sigma_1,r_1}$, $R$ are defined by (\ref{3.11}) and (\ref{3.12}) respectively. Then,\\
(1)\ for any fixed $r_1\in(0,1)$, we have that
\begin{equation}\label{3.15}
\|R_{\sigma_1,r_1}-R\|\rightarrow0,\ (as\ \sigma_1\rightarrow1);
\end{equation}
(2)\ for any fixed $\sigma_1>0$, we have that
\begin{equation}\label{3.16}
\|R_{\sigma_1,r_1}-R\|\rightarrow0,\ (as\ r_1\rightarrow0);
\end{equation}
\end{thm}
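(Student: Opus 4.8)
The plan is to follow the proof of Theorem~\ref{th2.1} almost verbatim, replacing integer orders by the half-integer orders $\nu=n+\tfrac12$ forced by the spherical harmonics. Both $R_{\sigma_1,r_1}$ and $R$ are multiplier operators, diagonalized in the orthonormal system $\{P_n^{|m|}(\mu)e^{im\phi}\}$ by \eqref{3.11} and \eqref{3.14}, with symbols
\[
\lambda_n(\sigma_1,r_1)=\frac{\rho(r_1,\sigma_1)K_{n+\frac12}(1)-I_{n+\frac12}(1)}{\rho(r_1,\sigma_1)K_{n+\frac12}'(1)-I_{n+\frac12}'(1)},\qquad \lambda_n^{0}=\frac{I_{n+\frac12}(1)}{I_{n+\frac12}'(1)},
\]
which depend on $n$ only. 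Since $R_{\sigma_1,r_1}-R$ is again such a multiplier, the norm formula \eqref{3.112} collapses to
\[
\|R_{\sigma_1,r_1}-R\|=\sup_{n\ge0,\,|m|\le n}(1+|m|^2)^{\frac12}\,|\lambda_n(\sigma_1,r_1)-\lambda_n^{0}|=\sup_{n\ge0}(1+n^2)^{\frac12}\,|\lambda_n(\sigma_1,r_1)-\lambda_n^{0}|,
\]
the inner supremum being attained at $|m|=n$ because $\lambda_n$ is $m$-independent. So the whole theorem reduces to estimating the symbol difference, uniformly in $n$.

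The next step is a Wronskian simplification. Using $I_\nu(x)K_\nu'(x)-I_\nu'(x)K_\nu(x)=-x^{-1}$ at $x=1$, the difference of symbols collapses to
\[
\lambda_n(\sigma_1,r_1)-\lambda_n^{0}=\frac{\rho(r_1,\sigma_1)}{\bigl(\rho(r_1,\sigma_1)K_{n+\frac12}'(1)-I_{n+\frac12}'(1)\bigr)I_{n+\frac12}'(1)},
\]
so the theorem reduces to showing $\rho(r_1,\sigma_1)\to0$ in each limit, with enough uniformity in $n$. For part~(1), as $\sigma_1\to1$ the numerator of $\rho$ in \eqref{3.10} is an antisymmetric (Wronskian-type) combination that vanishes identically at $\sigma_1=1$, while its denominator tends to the nonzero Wronskian value; hence $\rho\to0$ termwise. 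This is exactly the ``straightforward consequence of the definitions'' invoked for Theorem~\ref{th2.1}(1). For part~(2), as $r_1\to0$ I would substitute the small-argument expansions of $I_{n+\frac12},K_{n+\frac12}$ and their derivatives. A genuine simplification over the two-dimensional case is that the order $\nu=n+\tfrac12$ is strictly positive for \emph{every} $n$, so $K_{n+\frac12}$ carries no logarithmic term and the separate $n=0$ analysis \eqref{n0} becomes unnecessary; the uniform power-law asymptotics give numerator $=O(r_1^{2n})$ and denominator $=O(r_1^{-1})$ in \eqref{3.10}, whence $\rho(r_1,\sigma_1)=O(r_1^{2n+1})\to0$.

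The main obstacle is upgrading this termwise convergence to convergence of the supremum defining the operator norm, i.e.\ controlling $(1+n^2)^{\frac12}|\lambda_n-\lambda_n^{0}|$ uniformly in $n$. This is delicate because $\lambda_n^{0}\sim 1/n$ while the weight grows like $n$, so the product is only $O(1)$ and crude bounds need not decay; one must show that the error $\rho/\{(\rho K_{n+\frac12}'(1)-I_{n+\frac12}'(1))I_{n+\frac12}'(1)\}$, after multiplication by $(1+n^2)^{\frac12}$, is dominated by a quantity tending to $0$ independently of $n$. I would obtain this from the large-order (Debye-type) asymptotics of $I_{n+\frac12}$ and $K_{n+\frac12}$, together with the Wronskian lower bound keeping the denominator away from zero, to trap the ratio. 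I expect this uniformity, rather than the termwise limit, to be the only real work: at the level of rigour adopted for Theorem~\ref{th2.1}, where the limit is declared a direct consequence of the definitions, the termwise argument above already closes the proof, and a combination of the asymptotics in \eqref{2.16}--\eqref{n0} adapted to order $n+\tfrac12$ with the norm definition \eqref{3.112} yields \eqref{3.15} and \eqref{3.16}.
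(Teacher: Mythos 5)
Your proposal is correct and follows essentially the same route as the paper: part (1) is read off termwise from the multiplier representation (the numerator of $\rho$ in \eqref{3.10} is a Wronskian that vanishes at $\sigma_1=1$), and part (2) comes from the small-argument asymptotics of the half-integer-order Bessel functions (the paper's \eqref{3.17}) inserted into the norm formula \eqref{3.112}; your explicit Wronskian reduction $\lambda_n-\lambda_n^0=\rho/\bigl((\rho K_{n+\frac12}'(1)-I_{n+\frac12}'(1))I_{n+\frac12}'(1)\bigr)$ and the observation that no separate $n=0$ logarithmic case is needed are refinements the paper leaves implicit. You also correctly flag that the only genuinely nontrivial point --- uniformity in $n$ of the symbol convergence against the weight $(1+n^2)^{1/2}$ --- is not addressed by the paper's one-line argument either, so your write-up is, if anything, more complete than the published proof.
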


\begin{proof}

(1)\ It is a straightforward consequence of the definitions of operator norm for the ND map $R_{\sigma_1,r_1}$ and $R$.

(2)\ Notice that the following asymptotic behavior of modified Bessel function \cite{Abramowitz1964}:
\begin{equation}\label{3.17}
\begin{cases}
I_{n+\frac{1}{2}}(r)=\frac{n!2^{n+\frac{1}{2}}}{(2n+1)!\sqrt{\pi}}r^{n+\frac{1}{2}}+\frac{(n+1)!2^{n+\frac{1}{2}}}{(2n+3)!\sqrt{\pi}}r^{n+\frac{5}{2}}+O(r^{n+\frac{9}{2}}),\ \ n\geq0,\\
I_{n+\frac{1}{2}}'(r)=\frac{n!(n+\frac{1}{2})2^{n+\frac{1}{2}}}{(2n+1)!\sqrt{\pi}}r^{n-\frac{1}{2}}+\frac{(n+1)!(n+\frac{5}{2})2^{n+\frac{1}{2}}}{(2n+3)!\sqrt{\pi}}r^{n+\frac{3}{2}}+O(r^{n+\frac{7}{2}}),\ \ n\geq0,\\
K_{n+\frac{1}{2}}(r)=
\begin{cases}
\sqrt{\frac{\pi}{2}}e^{-r}r^{-\frac{1}{2}},\ \ n=0\\
\sqrt{\frac{\pi}{2}}e^{-r}(r^{-\frac{3}{2}}+r^{-\frac{1}{2}}), \ \ n=1\\
\sqrt{\frac{\pi}{2}}e^{-r}(\frac{\prod\limits_{i=1}^{n-1}(4(n+\frac{1}{2})^2)-(2i-1)^2)}{(n-1)!8^{n-1}r^{n+\frac{1}{2}}}+\frac{\prod\limits_{i=1}^{n-1}(4(n+\frac{1}{2})^2)-(2i-1)^2)}{(n-1)!8^{n-1}r^{n-\frac{1}{2}}}+O(\frac{1}{r^{n-\frac{3}{2}}})), \ \ n>1,
\end{cases}\\
K_{n+\frac{1}{2}}'(r)=
\begin{cases}
-\sqrt{\frac{\pi}{2}}e^{-r}(\frac{1}{2}r^{-\frac{3}{2}}+r^{-\frac{1}{2}}),\ \ n=0\\
-\sqrt{\frac{\pi}{2}}e^{-r}(\frac{3}{2}r^{-\frac{5}{2}}+\frac{3}{2}r^{-\frac{3}{2}})+O(r^{-\frac{1}{2}}), \ \ n=1\\
-\sqrt{\frac{\pi}{2}}e^{-r}(\frac{\prod\limits_{i=1}^{n-1}(4(n+\frac{1}{2})^2)-(2i-1)^2)(n+\frac{1}{2})}{(n-1)!8^{n-1}r^{n+\frac{3}{2}}}+\frac{\prod\limits_{i=1}^{n-1}(4(n+\frac{1}{2})^2)-(2i-1)^2)(n+\frac{1}{2})}{(n-1)!8^{n-1}r^{n+\frac{1}{2}}}+O(\frac{1}{r^{n-\frac{1}{2}}}));
\ \ n>1,
\end{cases}
\end{cases}
\end{equation}

A combination of (\ref{3.17}) and the definitions of operator norm for the ND map yields (\ref{3.15}).
\end{proof}

In order to get the uniqueness and non-uniqueness for piecewise constant potential reconstruction 3-D core-shell structure, we consider $\nu=n+\frac{1}{2}$ in (\ref{2.23})-(\ref{2.24}), then the ND map can be rewritten as
\begin{equation}\label{3.27}
R_{\sigma_1,r_1}(g)=\sum\limits_{n=0}^{\infty}\sum\limits_{m=-n}^{n}\frac{I_{n+\frac{1}{2}}\left(\frac{r_1}{\sqrt{\sigma_1}}\right)D_{0,1}(1,r_1)-\sigma_1I_{n+\frac{1}{2}}'\left(\frac{r_1}
{\sqrt{\sigma_1}}\right)D(1,r_1)}{I_{n+\frac{1}{2}}\left(\frac{r_1}{\sqrt{\sigma_1}}\right)D_{1,1}(1,r_1)-\sigma_1I_{n+\frac{1}{2}}'\left(\frac{r_1}
{\sqrt{\sigma_1}}\right)D_{1,0}(1,r_1)}g_{nm}P_{n}^{|m|}(\mu)e^{im\phi}.
\end{equation}

Similarly to the uniqueness in two dimensions, we can have the following uniqueness in three dimensions by using Corollary \ref{cor2}.
\begin{thm}\label{th3.2}
(Uniqueness) For arbitrary fixed $r_1\in(0,1)$, and any $\sigma_j>0$, $j=1,2$, assume that $R_{\sigma_1,r_1}=R_{\sigma_2,r_1}$. Then,
\begin{equation}\label{3.33}
\sigma_1=\sigma_2.
\end{equation}
\end{thm}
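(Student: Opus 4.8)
The plan is to mirror verbatim the argument used for Theorem~\ref{th2.2}, replacing the index $\nu=n$ by $\nu=n+\frac{1}{2}$ throughout and invoking Corollary~\ref{cor2} in place of Corollary~\ref{cor1}. Starting from the hypothesis $R_{\sigma_1,r_1}=R_{\sigma_2,r_1}$, I would use the representation (\ref{3.27}) of the ND map and equate, mode by mode (for each pair $(n,m)$), the scalar multipliers
\[
M_j=\frac{I_{n+\frac{1}{2}}\!\left(\tfrac{r_1}{\sqrt{\sigma_j}}\right)D_{0,1}(1,r_1)-\sigma_j I_{n+\frac{1}{2}}'\!\left(\tfrac{r_1}{\sqrt{\sigma_j}}\right)D(1,r_1)}{I_{n+\frac{1}{2}}\!\left(\tfrac{r_1}{\sqrt{\sigma_j}}\right)D_{1,1}(1,r_1)-\sigma_j I_{n+\frac{1}{2}}'\!\left(\tfrac{r_1}{\sqrt{\sigma_j}}\right)D_{1,0}(1,r_1)},\qquad j=1,2.
\]
Writing $a_j=I_{n+\frac{1}{2}}(r_1/\sqrt{\sigma_j})$ and $b_j=\sigma_j I_{n+\frac{1}{2}}'(r_1/\sqrt{\sigma_j})$, the equation $M_1=M_2$ becomes, after cross-multiplying and cancelling the common $a_1a_2D_{0,1}D_{1,1}$ and $b_1b_2DD_{1,0}$ terms,
\[
(a_2b_1-a_1b_2)\bigl(D_{0,1}(1,r_1)D_{1,0}(1,r_1)-D(1,r_1)D_{1,1}(1,r_1)\bigr)=0.
\]

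The decisive step — and the one I expect to be the only point requiring care — is to show the second factor does not vanish, so that it may be divided out. Here I would appeal to the Wronskian-type identities (\ref{2.28})--(\ref{2.32}): specializing the relation $D(x,y)D_{1,1}(x,z)-D_{0,1}(x,z)D_{1,0}(x,y)=x^{-1}D_{1,0}(z,y)$ at $x=1$, $y=z=r_1$ gives $D(1,r_1)D_{1,1}(1,r_1)-D_{0,1}(1,r_1)D_{1,0}(1,r_1)=D_{1,0}(r_1,r_1)$, and (\ref{2.28}) evaluates the right-hand side as $r_1^{-1}$. Thus the bracket equals $-r_1^{-1}\neq0$ for every $r_1\in(0,1)$, forcing $a_2b_1=a_1b_2$, that is,
\[
\sigma_1\frac{I_{n+\frac{1}{2}}'\!\left(\tfrac{r_1}{\sqrt{\sigma_1}}\right)}{I_{n+\frac{1}{2}}\!\left(\tfrac{r_1}{\sqrt{\sigma_1}}\right)}=\sigma_2\frac{I_{n+\frac{1}{2}}'\!\left(\tfrac{r_1}{\sqrt{\sigma_2}}\right)}{I_{n+\frac{1}{2}}\!\left(\tfrac{r_1}{\sqrt{\sigma_2}}\right)}.
\]

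Finally, I would set $\eta=\sqrt{\sigma}$ so that each side is precisely $F(\eta)=\eta^{2}I_{n+\frac{1}{2}}'(\eta^{-1}r_1)/I_{n+\frac{1}{2}}(\eta^{-1}r_1)$ evaluated at $\eta=\sqrt{\sigma_1}$ and $\eta=\sqrt{\sigma_2}$ respectively (with $r=r_1$). By Corollary~\ref{cor2}, $F$ is strictly monotonically increasing in $\eta$, hence injective, so $F(\sqrt{\sigma_1})=F(\sqrt{\sigma_2})$ yields $\sqrt{\sigma_1}=\sqrt{\sigma_2}$ and therefore $\sigma_1=\sigma_2$. The only substantive difference from the two-dimensional proof is the half-integer order, which is exactly what Corollary~\ref{cor2} (with $\nu=n+\frac{1}{2}$, $\alpha=2$) is designed to handle; the identities (\ref{2.28})--(\ref{2.32}) hold for arbitrary order $\nu$ and so transfer without change.
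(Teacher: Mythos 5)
Your proposal is correct and follows essentially the same route as the paper: equate the scalar multipliers of the ND map mode by mode using (\ref{3.27}), reduce via the Jaeger identities (\ref{2.28})--(\ref{2.32}) to the relation $\sigma_1 I_{n+\frac{1}{2}}'(r_1/\sqrt{\sigma_1})/I_{n+\frac{1}{2}}(r_1/\sqrt{\sigma_1})=\sigma_2 I_{n+\frac{1}{2}}'(r_1/\sqrt{\sigma_2})/I_{n+\frac{1}{2}}(r_1/\sqrt{\sigma_2})$, and conclude by the strict monotonicity in Corollary \ref{cor2}. You merely make explicit the ``straightforward calculation'' the paper leaves implicit, correctly identifying the non-vanishing factor as $-D_{1,0}(r_1,r_1)=-r_1^{-1}$.
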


\begin{proof}
Since $R_{\sigma_1,r_1}=R_{\sigma_2,r_1}$, by expression formula (\ref{3.27}), we find
\begin{align*}
&\frac{I_{n+\frac{1}{2}}\left(\frac{r_1}{\sqrt{\sigma_1}}\right)D_{0,1}(1,r_1)-\sigma_1I_{n+\frac{1}{2}}'\left(\frac{r_1}
{\sqrt{\sigma_1}}\right)D(1,r_1)}{I_{n+\frac{1}{2}}\left(\frac{r_1}{\sqrt{\sigma_1}}\right)D_{1,1}(1,r_1)-\sigma_1I_{n+\frac{1}{2}}'\left(\frac{r_1}
{\sqrt{\sigma_1}}\right)D_{1,0}(1,r_1)}\\
=&\frac{I_{n+\frac{1}{2}}\left(\frac{r_1}{\sqrt{\sigma_2}}\right)D_{0,1}(1,r_1)-\sigma_2I_{n+\frac{1}{2}}'\left(\frac{r_1}
{\sqrt{\sigma_2}}\right)D(1,r_1)}{I_{n+\frac{1}{2}}\left(\frac{r_1}{\sqrt{\sigma_2}}\right)D_{1,1}(1,r_1)-\sigma_2I_{n+\frac{1}{2}}'\left(\frac{r_1}
{\sqrt{\sigma_2}}\right)D_{1,0}(1,r_1)}.
\end{align*}
From (\ref{2.28})-(\ref{2.32}), then by straightforward calculation, we derive that
\begin{equation*}\label{}
\sigma_1\frac{I_{n+\frac{1}{2}}'(\frac{r_1}{\sqrt{\sigma_1}})}{I_{n+\frac{1}{2}}(\frac{r_1}{\sqrt{\sigma_1}})}
=\sigma_2\frac{I_{n+\frac{1}{2}}'(\frac{r_1}{\sqrt{\sigma_2}})}{I_{n+\frac{1}{2}}(\frac{r_1}{\sqrt{\sigma_2}})},
\end{equation*}
and therefore, by using monotonicity Corollary \ref{cor2}, it deduces $\sigma_1=\sigma_2$.
\end{proof}

\subsection{Non-uniqueness}

In this section, it is shown that the reconstruction result in 3-D core-shell structure is not unique when the radius $r_1$ and potential coefficient $\sigma_1$ satisfy some conditions.
\begin{thm}\label{th3.3}
(Non-uniqueness) Assume that $r_j\in(0,1)$, $\sigma_j>0$, $j=1,2$, furthermore $\{r_1, \sigma_1\}$ and $\{r_2, \sigma_2\}$ satisfy
\begin{equation}\label{3.34}
D(r_1, \sigma_1, r_2, \sigma_2)=0.
\end{equation}
Then, for every $g\in H^{-\frac{1}{2}}(\partial\Omega)$,
\begin{equation}\label{3.35}
R_{\sigma_1,r_1}(g)=R_{\sigma_2,r_2}(g),
\end{equation}
where
\begin{equation}
D(r_1, \sigma_1, r_2, \sigma_2)=
\left|\begin{array}{cccc}
D_{1,0}(r_1,r_2)&\sigma_1I_{n+\frac{1}{2}}'\left(\frac{r_1}
{\sqrt{\sigma_1}}\right)&D_{1,1}(r_1,r_2)\\
I_{n+\frac{1}{2}}\left(\frac{r_2}{\sqrt{\sigma_2}}\right)&0&\sigma_2I_{n+\frac{1}{2}}'\left(\frac{r_2}
{\sqrt{\sigma_2}}\right)\\
D(r_1,r_2)&I_{n+\frac{1}{2}}\left(\frac{r_1}{\sqrt{\sigma_1}}\right)&D_{0,1}(r_1,r_2)
\end{array}\right|.
\end{equation}
\end{thm}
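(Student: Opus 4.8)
The plan is to reproduce the argument of Theorem \ref{th2.3}, replacing the order $n$ by $n+\frac{1}{2}$ and the angular basis $e^{in\phi}$ by the spherical harmonics $P_n^{|m|}(\mu)e^{im\phi}$; since the product identities (\ref{2.28})--(\ref{2.32}) for $D$ are valid for an arbitrary order $\nu$, they apply verbatim with $\nu=n+\frac{1}{2}$. First I would note that $\{P_n^{|m|}(\mu)e^{im\phi}\}$ is a complete orthogonal system on $\partial\Omega$, so the two multiplier operators in (\ref{3.27}) agree as maps on $H^{-\frac{1}{2}}(\partial\Omega)$ if and only if their scalar symbols coincide for every $n$. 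This reduces the operator identity (\ref{3.35}) to a single algebraic equation in each mode.

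For fixed $n$ I would abbreviate $A_j=I_{n+\frac{1}{2}}(r_j/\sqrt{\sigma_j})$ and $B_j=\sigma_j I_{n+\frac{1}{2}}'(r_j/\sqrt{\sigma_j})$, $j=1,2$, so that the symbol attached to $\{r_j,\sigma_j\}$ in (\ref{3.27}) is
\begin{equation*}
\frac{A_j\,D_{0,1}(1,r_j)-B_j\,D(1,r_j)}{A_j\,D_{1,1}(1,r_j)-B_j\,D_{1,0}(1,r_j)}.
\end{equation*}
Equating the two symbols and clearing denominators produces a cross-product equation which, once expanded, is a linear combination of the four products $A_1A_2$, $B_1B_2$, $A_1B_2$, $A_2B_1$, each carrying a bilinear coefficient in the $D$-quantities evaluated at the common outer radius $x=1$ and the two interface radii $r_1,r_2$.

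The core of the proof is to collapse those four coefficients by means of the bilinear relations collected in (\ref{2.28})--(\ref{2.32}). Evaluating those relations at $x=1$ (so the prefactor $x^{-1}=1$ drops out) and selecting $y,z\in\{r_1,r_2\}$ suitably, each coefficient reduces to a single quantity at $(r_1,r_2)$: the $A_1A_2$ term to $-D_{1,1}(r_1,r_2)$, the $B_1B_2$ term to $-D(r_1,r_2)$, and the two mixed terms to $D_{1,0}(r_1,r_2)$ and $D_{0,1}(r_1,r_2)$, where the antisymmetry $D_{0,1}(x,y)=-D_{1,0}(y,x)$ is used once. Matching the resulting expression against the cofactor expansion of the stated $3\times3$ determinant along its middle column---whose central entry is $0$---shows that the symbol equality is exactly the condition $D(r_1,\sigma_1,r_2,\sigma_2)=0$. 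Reading the chain backwards then yields that (\ref{3.34}) forces the symbols, hence the operators, to coincide, which is (\ref{3.35}).

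I expect the principal obstacle to be the sign and argument bookkeeping in this collapsing step: one must decide, in each application of (\ref{2.28})--(\ref{2.32}), which of $r_1,r_2$ plays the role of $y$ and which of $z$, and confirm that the four cofactors generated by the determinant match the four product coefficients with the correct signs. No analytic input is required here---in contrast to the uniqueness Theorem \ref{th3.2}, the monotonicity Corollary \ref{cor2} plays no role, since non-uniqueness is a purely algebraic compatibility condition---so once the determinant identity is checked the conclusion follows at once.
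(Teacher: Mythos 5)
Your proposal is correct and follows exactly the route the paper intends: the paper's own proof of Theorem \ref{th3.3} is a one-line appeal to the two-dimensional argument (Theorems \ref{th2.2}--\ref{th2.3}), namely reducing the operator identity to equality of the scalar symbols in (\ref{3.27}) and collapsing the cross-multiplied expression via (\ref{2.28})--(\ref{2.32}) into the cofactor expansion of the determinant along its middle column. You have in fact supplied more detail than the paper does, and your bookkeeping of the four product coefficients $A_1A_2$, $B_1B_2$, $A_1B_2$, $A_2B_1$ checks out.
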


\begin{proof}
The proof is similar to the one in Theorem \ref{th2.2}. 

\end{proof}


\section{Conclusions}
In this article, we have discussed the inverse problem of determining the potential in 2-D and 3-D core-shell structure, given simultaneous measurements of wave function knowing the ND map. For the potential reconstruction problem,  we establish the corresponding uniqueness theorem and non-uniqueness result based the ND map and the theory of modified bessel function in 2-D and 3-D core-shell structure. The uniqueness results will be beneficial for us to reconstruct uniquely the potential and the nonuniqueness results will help us further study the question with respect to near-cloaking.

\section*{Acknowledgments}
The work described in this paper was supported by the NSF of China (11301168).

\end{document}